\documentclass[a4paper]{amsart}
\usepackage{url,graphicx, amsmath, amsthm, amssymb, color, hyperref, xurl, tikz, color,soul}
\usepackage[foot]{amsaddr}
\usepackage{verbatim}
\newtheorem{theorem}{Theorem}[section]

\newtheorem{lemma}[theorem]{Lemma}
\newtheorem{corollary}[theorem]{Corollary}

\newtheorem{definition}{Definition}[section]

\title{On smooth elements in arithmetic semigroups}
\author{Giovanna De Lauri$^1$, Dan Fretwell$^2$, Jack Ye$^3$}
\address{$^1$ Lancaster University, \url{g.delauri@lancaster.ac.uk}}
\address{$^2$ Lancaster University, \url{d.fretwell@lancaster.ac.uk}} \address{$^3$ Lancaster University, \url{h.ye7@lancaster.ac.uk}}
\date{}

\begin{document}

\begin{abstract}
 In this paper we explore the concept of smoothness in arithmetic semigroups. In particular, we give explicit asymptotics for the counting function $\Psi_G(x,y)$ when the arithmetic semigroup $G$ satisfies Axiom A (in the spirit of Knopfmacher \cite{knopfmacher2015abstract}). The asymptotics generalise the standard results in the literature for the integer case, and are expressed in terms of the Dickman $\rho$ function. Finally, we give applications of our results to counting smooth families of finite abelian groups and finite semisimple rings.
\end{abstract}

\maketitle

\section{Introduction}

A positive integer
$n$ is called \emph{$y$-smooth} if every prime
factor of $n$ is at most $y$. Smooth numbers play an important role in many computational problems, for example in integer factorisation algorithms such as the Quadratic Sieve, where it is desirable to work with integers only having ``small" prime factors for  the Linear Algebra phase of the algorithm to work well (choosing a smoothness bound allows us to usefully model factorisations as finite vectors of exponents). See Pomerance's work \cite{pomerance1996tale, pomerance2008smooth} for a gentle introduction to this algorithm. \\

In order to analyse the performance of the Quadratic Sieve, it is necessary to have asymptotics for the counting function of $y$-smooth numbers:
$$
  \Psi(x,y) := \#\{n\le x : n \text{ is } y\text{-smooth}\}.
$$
Much is known about this function. In the 1930's, Dickman \cite{dickman1930frequency} introduced the function $\rho(u)$ satisfying $\rho(u)=1$ for $0\le u\le1$ and $u\rho'(u)+\rho(u-1)=0$ for $u>1$. Writing $x = y^u$ and letting $x \to \infty$, the following asymptotic is then known to hold: 
$$\Psi(x,y) \sim x \rho(u). $$
The error term in this asymptotic was provided in the 1950's by de Bruijn \cite{de1951number}. He was able to show that uniformly for a wide range of $u$: 
$$
\Psi(x,y) = x\rho(u)\left(1+O_\varepsilon\left(\frac{\log(u+1)}{\log y}\right)\right)
$$
with $x \geq 2$ and a specific bound on $y$, for any fixed $\varepsilon$. Hildebrand
\cite{hildebrand1986number} later proved that the same estimate holds uniformly for $
  1 \le u \le \exp\big((\log y)^{3/5-\varepsilon}\big)$ which remains the best result. A comprehensive account of this can be found in the survey of Hildebrand
and Tenenbaum \cite{hildebrand1993integers}. In \cite{weingartner2026explicit}, Weingartner gives fully explicit upper and lower bounds for the Dickman $\rho(u)$ function. \\

The classical proofs of the above results utilize an identity of Buchstab, which roughly speaking gives that the density of smooth numbers in a given interval depends recursively on the density at preceding intervals. \\

Generalisations of the notion of smoothness exist. For example if $K$ is a number field with ring of integers $\mathcal{O}_K$, then we can use the norm as a notion of size and consider an element $\alpha\in\mathcal{O}_K$ to be $y$-smooth if its principal ideal $\langle\alpha\rangle$ has norm that is $y$-smooth (i.e.\ the ideal is only divisible by prime ideals of small norm). Understanding the asymptotics of the associated counting function in this case is vital when analysing the performance of the Number Field Sieve \cite{buhler2008algorithmic}, the wiser big brother of the Quadratic Sieve. The analysis of such sieves is important when benchmarking the cryptographic security of schemes depending on integer factorisation and discrete log problems, e.g.\ RSA and Diffie-Hellman key exchange (see \cite{buhler2008algorithmic}).\\

In this paper, we generalise the notion of smoothness to elements of arithmetic semigroups (in the spirit of Knopfmacher
\cite{knopfmacher2015abstract}). Roughly speaking, an arithmetic semigroup is a
commutative semigroup $G$ where elements possess a unique prime factorisation into finitely many primes (for some countable subset of primes) and have a compatible norm function satisfying many natural properties (precise definitions will be given later). Examples include the set of natural numbers with the usual norm, and more generally the set of ideals of $\mathcal{O}_K$ with the ideal norm. However, many more examples exist, e.g.\ finite abelian groups and finite semisimple rings (see \cite{knopfmacher2015abstract} for more discussion).\\

One of the main goals of the book \cite{knopfmacher2015abstract} is to generalise the Prime Number Theorem to arithmetic semigroups. This is achieved under the assumption that Axiom A holds for the arithmetic semigroup, i.e. the mild assumption that the number of elements of norm bounded by $x$ grows like $Ax^{\delta}$ for some $\delta \geq 0$. Arithmetic semigroups satisfying Axiom A have a much more controllable arithmetic structure than others, and as such are the best candidates to consider when attempting to generalise classical results over the integers. For example, under this assumption we are able to establish the following smoothness asymptotics that are analogous to those over the integers and rings of integers of number fields.

\begin{theorem}
Let $G$ be an arithmetic semigroup satisfying Axiom A. For every fixed $U>0$, as $x\to\infty$,
\[
\Psi_G(x,y)
=
Ax^\delta\rho(u)
+O\!\left(\frac{x^\delta}{\log x}\right),
\qquad
u=\frac{\log x}{\log y},
\]
uniformly for $y\ge2$ and $0<u\le U$.
\end{theorem}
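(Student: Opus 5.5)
We will argue by induction on $\lfloor u\rfloor$ using a Buchstab-type identity, with the prime ideal theorem for $G$ as the main analytic input. Throughout, Axiom A gives $N_G(x)=Ax^\delta+O(x^{\delta-\eta})$ for some $\eta>0$. From Knopfmacher's abstract prime number theorem we take the estimate $\pi_G(x)=\operatorname{li}(x^\delta)+O(x^\delta/\log^2 x)$, or at least a Mertens-type version of it. Here $\pi_G(x)$ is the number of primes $p\in G$ with $|p|\le x$. The consequence we need is that, for a $C^1$ weight $f$ and $y\le x$,
\[
\sum_{y<|p|\le x} f(|p|)=\int_y^x f(t)\,\frac{t^{\delta-1}}{\log t}\,dt+\text{(error)},
\]
with the error controlled by partial summation.

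\textbf{Base range $0<u\le 1$.} Here $y\ge x$, so every element of norm at most $x$ is $y$-smooth. Hence $\Psi_G(x,y)=N_G(x)=Ax^\delta+O(x^{\delta-\eta})$, and since $\rho(u)=1$ this is well within the claimed error.

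\textbf{Buchstab identity.} Each element $a$ with $|a|\le x$ that is not $y$-smooth has a unique largest prime factor $p$ with $|p|>y$. Writing $a=pb$, the cofactor $b$ is $|p|$-smooth, with the convention that primes of equal norm are ordered by a fixed total order. This yields
\[
\Psi_G(x,y)=N_G(x)-\sum_{y<|p|\le x}\Psi_G\!\left(\frac{x}{|p|},|p|\right)+E,
\]
where $E$ accounts for ties among primes of equal norm, including $p^2\mid a$. We bound $E$ by $\sum_{|p|>y}N_G(x/|p|^2)\ll x^\delta y^{-\delta}$, which is $O(x^\delta/\log x)$ when $u\le U$ and $y\ge 2$. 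We must take care here, since the number of primes of a given norm can be large. We also use that $\Psi_G(z,w)\le N_G(z)\ll z^\delta$ throughout.

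\textbf{Inductive step.} Assume the estimate for $u\le k$ and take $k<u\le k+1$. Each term has $x/|p|=|p|^{v}$ with $v=\log x/\log|p|-1<u-1\le k$, so the induction hypothesis gives
\[
\Psi_G(x/|p|,|p|)=A(x/|p|)^\delta\rho(v)+O\!\left(\frac{(x/|p|)^\delta}{\log(x/|p|)}\right).
\]
For the main term we sum $A x^\delta |p|^{-\delta}\rho(\log x/\log|p|-1)$ over primes by partial summation against $\pi_G$. Substituting $t=x^{1/s}$ gives
\[
Ax^\delta\int_1^u\frac{\rho(s-1)}{s}\,ds,
\]
and since $1-\int_1^u\rho(s-1)s^{-1}\,ds=\rho(u)$ by the delay differential equation, this produces $Ax^\delta\rho(u)$. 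The PNT error, after partial summation, is $\ll x^\delta\int_y^x t^{-1}\log^{-2}t\,dt\ll x^\delta/\log y$. Since $\log y\ge\log x/U$, this is $O_U(x^\delta/\log x)$.

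\textbf{Main obstacle.} The hard part is the accumulated inductive error $\sum_{y<|p|\le x}(x/|p|)^\delta/\log(x/|p|)$. Near $|p|\approx x$ the factor $\log(x/|p|)$ degenerates, and at the top the induction hypothesis is not even in range. We therefore split at $|p|\le x/Y$ for a fixed large constant $Y$ and handle $x/Y<|p|\le x$ trivially. In that range $\Psi_G(x/|p|,|p|)=N_G(x/|p|)\ll 1$, so its contribution is $\ll\pi_G(x)-\pi_G(x/Y)\ll x^\delta/\log x$. For the remaining range we use $\sum_{|p|\le z}|p|^{-\delta}\approx\log\log z$ to get $x^\delta\sum_p |p|^{-\delta}/\log(x/|p|)\ll x^\delta\log\log x/\log x$. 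The stray $\log\log x$ factor must be removed. To do so, we use that the number of primes with $|p|\in(x/2^{j+1},x/2^j]$ is $\ll x^\delta 2^{-j\delta}/\log x$ for $2^j\le\sqrt x$. Dyadic summation then gives $\sum_j (x/2^j)^{-\delta}\cdot x^\delta2^{-j\delta}/(j\log x)\cdot(\ldots)$, which reduces to $\sum_j 1/(j\log x)$ and still carries a logarithm. The real fix is to weight the pieces by $\rho$ and track constants depending only on $U$: the induction is over at most $\lceil U\rceil$ steps, each contributing $O_U(x^\delta/\log x)$ once the degenerate range $|p|>x^{1-\epsilon}$ is handled with the trivial bound $\Psi_G\le N_G$ and Brun–Titchmarsh-type counts from the PNT. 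The implied constant may grow with $U$, which is permitted since $U$ is fixed.
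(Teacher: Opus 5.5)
Your proposal has a genuine gap, and it is the one you flag as the ``main obstacle'': the accumulated inductive error is never brought down to $O(x^\delta/\log x)$. You expand from $N_G(x)$ over all primes $y<|p|\le x$ and feed every term $\Psi_G(x/|p|,|p|)$ into the induction hypothesis, whose error is only $O(z^\delta/\log z)$. For the primes with $x^{1-\epsilon}<|p|\le x/Y$, the best this gives is $x^\delta\sum_p|p|^{-\delta}/\log(x/|p|)\asymp x^\delta\log\log x/\log x$, exactly as you compute. Neither the dyadic attempt nor ``weighting by $\rho$'' can remove this, since $\rho(v)=1$ for these $v\le 1$ and the hypothesis' error term carries no factor $\rho$. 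Your proposed remedy also fails: bounding $\Psi_G(x/|p|,|p|)\le N_G(x/|p|)$ on $x^{1-\epsilon}<|p|\le x$ discards the main-term contribution $Ax^\delta\sum_{x^{1-\epsilon}<|p|\le x}|p|^{-\delta}\asymp\epsilon x^\delta$. That is far larger than $x^\delta/\log x$ for fixed $\epsilon$, and if you let $\epsilon\to0$ the $\log\log x$ loss returns. As written, each inductive step only yields $O(x^\delta\log\log x/\log x)$.

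The missing idea is to use the stronger information available when $|p|>\sqrt{x}$. There $x/|p|<|p|$, so $\Psi_G(x/|p|,|p|)=N_G(x/|p|)$ \emph{exactly}, and Axiom A evaluates it with a power-saving error. In your normalisation these errors total $x^{\delta-\eta}\sum_{|p|\le x}|p|^{\eta-\delta}\ll x^\delta/\log x$, by partial summation against Mertens (cf.\ Lemma~\ref{lem:weighted}). For $y<|p|\le\sqrt{x}$ you have $\log(x/|p|)\ge\frac12\log x$ and $\sum_{y<|p|\le\sqrt{x}}|p|^{-\delta}\ll_U1$, so the inductive error there is $O_U(x^\delta/\log x)$.

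The paper organises the same idea differently. It treats $1<u\le2$ directly via Axiom A. For $N<\tilde u\le N+1$ with $N\ge2$, it subtracts \eqref{ie} at the thresholds $x^{1/N}$ and $x^{1/\tilde u}$. The hypothesis is then only invoked for $x^{1/\tilde u}<|p|\le x^{1/N}$, where $x/|p|\ge x^{1-1/N}$. All larger primes are absorbed into the single already-known term $\Psi_G(x,x^{1/N})$.

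A secondary slip concerns your tie correction $E$. It counts pairs of distinct primes of equal norm, so it should be bounded by $x^\delta\sum_{t>y}d_t^2t^{-2\delta}$, not by $\sum_{|p|>y}N_G(x/|p|^2)$. Your bound $x^\delta y^{-\delta}$ ignores the multiplicities $d_t$. The estimate $d_tt^{-\delta}\le\pi_G(t)t^{-\delta}\ll1/\log t$ still gives $E\ll x^\delta/\log y$, as in Lemma~\ref{lem:collision}.
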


Prior to this work, smoothness in arithmetic \emph{graded} semigroups (arithmetic semigroups with an integer-valued
completely additive degree function) had already been studied (e.g.\ in work of Warlimont \cite{warlimont1991arithmetical} and Manstavi\v{c}ius \cite{manstavivcius1992remarks}). In the graded setting, the norm is determined by an integer-valued degree $|g|=q^{\deg g}$ for a fixed base $q>1$. Smoothness can therefore be expressed as a bound on the degrees of the prime factors. Our setting instead focuses on general real-valued multiplicative norms, and we count elements cumulatively up to a norm bound. Thus the norms are not required to be confined to powers of a single fixed base. Hence the results of Warlimont and Manstavi\v{c}ius do not directly apply to the applications in Section \ref{sec:app}, justifying the importance of Theorem~\ref{thm:main}.\\

We illustrate the result via applications to finite abelian groups and finite semisimple rings, counted up to isomorphism. In these settings, smoothness bounds the sizes of the prime power factors and simple factors respectively. Our theorem gives quantitative asymptotics for the number of such objects of bounded size.\\

The paper is structured in the following way, Section \ref{sec:setup} introduces the setting and the auxiliary results, Section \ref{sec:main} contains the main theorem and its proof and Section \ref{sec:app} follows with the applications.

\section{Acknowledgments}
This work was carried out during the PhD of the first and third named authors, and forms part of the thesis of the third named author. We would like to thank Daniel Shiu for helpful insights into further work. Additionally, we are grateful for kind advice given to us by Lian Duan, Ning Ma, and Shaoyun Yi \cite{duan2022generalizations}, who have worked on the generalisation of Alladi's formula to
arithmetic semigroups under Axiom A and Axiom A$^\#$.

 \section{Setup and Notation}\label{sec:setup}
In this section, we introduce the framework and notation for studying smooth elements in arithmetic semigroups satisfying Axiom A. We first recall the relevant background and collect the results required for our main theorem.
\subsection{Arithmetic semigroups}

Most of the definitions and results in this subsection are taken from Knopfmacher \cite{knopfmacher2015abstract}.

\begin{definition}\label{def:arithmetic-semigroup}

Let $G$ be a commutative semigroup with identity element $1\in G$. Suppose that $P\subseteq G$ is a finite or countably infinite subset such that every $g\in G\backslash\{1\}$ has a finite factorisation of the form: \[g = p_1^{a_1}p_2^{a_2}...p_n^{a_n},\] that is unique up to ordering, with $n\geq 1$, each $p_i\in P$ distinct and each $a_i\geq 1$.

        We call $G$ an arithmetic semigroup if in addition there exists a real-valued \textit{norm mapping}
\[
|\cdot|:G\longrightarrow[1,\infty)
\]
satisfying:

\begin{itemize}
    \item[(i)] $|1| = 1$, $|p| > 1$ for $p \in P$,
    \item[(ii)] $|fg| = |f| |g|$ for all $f,g \in G$,
    \item[(iii)] the counting function $N_G(x) =\#\{g\in G:|g|\le x\}$ is finite, for each real $x > 0$.
\end{itemize}
\end{definition}

The above conditions imply that the prime counting function \[\pi_G(x) = \#\{g\in P\,|\, |g|\le x\}\] is finite for each real $x>0$.\\

\noindent \textbf{Examples.}
\textit{
Below are a couple of examples of arithmetic semigroups. More can be found in \cite[Chapter 1]{knopfmacher2015abstract}.
\begin{itemize}
\item{The most classical example is given by $G = \mathbb{N}_{\geq 1}$, with $P \subseteq G$ being set of prime numbers and the norm mapping being the standard norm $|n| = n$. We have $N_G(x) = \lfloor x \rfloor\sim x$.}
\item{Another simple example arises from the Gaussian integers. Let $G = \mathbb{Z}[i]/\{\pm 1,\pm i\}$ (i.e.\ associate classes of elements) and let $P \subseteq G$ be the classes represented by Gaussian primes (i.e.\ $1+i$, primes $p\equiv 3 \bmod 4$ and $a+bi$ where $a^2+b^2=p$ and $p\equiv 1 \bmod 4$). The norm mapping is the field norm $|a+bi| = a^2+b^2$ and then $N_G(x) = \frac{1}{4} \sum_{n \leq x}r(n)$ with $r(n) = 
    \#\{(a,b) \in \mathbb{R}^2: a^2+b^2 = n\}$, so that $N_G(x)\sim \frac{\pi}{4}x$.}
\end{itemize}}

 We can define the notion of smoothness for elements of arithmetic semigroups in the obvious way.

 \begin{definition} \label{def:smooth}
Let $G$ be an arithmetic semigroup and let $y\geq 1$. We say that $g\in G$ is $y$-smooth if every prime $p_i\in P$ appearing in the factorisation of $g$ satisfies $|p_i| \leq y$.

\noindent For any $x\geq 1$  and $1 \leq y \leq x$ we let \[S_G(x,y) = \{g\in G\,|\, |g|\leq x, |p_i|\leq y \text{ for all } i\}\] be the set of $y$-smooth elements of $G$ of norm $x$ or less and we let $\Psi_G(s,y) = \#S_G(x,y)$ count these elements.
\end{definition}

Note that $\Psi_G(x,y)$ is necessarily finite and that $1\in S_G(x,y)$. For $0<x<1$, we may set
$S_G(x,y)=\varnothing$ and $\Psi_G(x,y)=0$. Thus
\[
0\le\Psi_G(x,y)\le N_G(x)
\qquad(x,y\ge1).
\] 
We wish to understand asymptotics for the function $\Psi_G(x,y)$.

       To study $\Psi_G(x,y)$, we require estimates for the
distribution of prime elements in $G$. We therefore require asymptotics for the prime-counting function $\pi_G(x)$.\\

        A key result in the book by Knopfmacher \cite{knopfmacher2015abstract} is the following, which we will assume throughout Sections \ref{sec:setup} and \ref{sec:main}. Suppose that $G$ is an arithmetic semigroup satisfying ``Axiom A", i.e.\ that there exist positive constants $A,\delta$ and $0\leq \eta < \delta$ such that 
        \begin{equation}\label{axiom}
        N_G(x) = Ax^{\delta} + O(x^{\eta})\qquad(x\to\infty).
        \end{equation}
        It is proven in \cite[Chapter VI]{knopfmacher2015abstract} for any arithmetic semigroup $G$ satisfying Axiom A that
        \begin{equation}\label{primecounting}
    \pi_G(x)\sim\frac{x^\delta}{\delta\log x}\quad(x\rightarrow\infty).
        \end{equation}
        Quite a few classical examples of Prime Number Theorems are special cases of this result (e.g. the classical Prime Number Theorem and Landau's Prime Number Theorem \cite{landau2010einfuhrung} for ideals of bounded norm in rings of integers of number fields).

\subsection{The Dickman \texorpdfstring{$\rho$}{rho} function}
A lot is known about smoothness in the case where $G = \mathbb{N}_{\geq 1}$ under multiplication, $P$ is the set of primes and $|\cdot|$ is the usual absolute value.
The Dickman $\rho$ function describes the limiting proportion of smooth integers when $u=\log x/\log y$ is fixed. The same function appears in our asymptotic formula for $\Psi_G(x,y)$ under Axiom A. We record its definition and the properties needed in the proof of our main theorem.
\begin{definition}\label{def:dickman}
The Dickman function is the unique continuous function
$\rho:[0,\infty)\to\mathbb R$, differentiable on
$(1,\infty)$, satisfying
\[
\rho(u)=1\qquad(0\le u\le1)
\]
and
\[
u\rho'(u)+\rho(u-1)=0\qquad(u>1).
\]
\end{definition}

Integrating the defining differential equation gives,
for every integer $N\ge1$,
\begin{equation}\label{rho-recursion}
\rho(u)
=
\rho(N)-\int_N^u\frac{\rho(v-1)}{v}\,dv,
\qquad N\le u\le N+1.
\end{equation}
This identity determines $\rho$ successively on the
intervals $[N,N+1]$. In particular,
\[
\rho(u)=1-\log u\qquad(1\le u\le2).
\]

The function $\rho$ is strictly positive on $[0,\infty)$,
equals $1$ on $[0,1]$, and is strictly decreasing on
$(1,\infty)$. Hence, for every fixed $U>0$,
\[0<\rho(U)\le\rho(u)\le1
\qquad(0\le u\le U).\]
For the definition and proofs of these properties, see
Hildebrand-Tenenbaum
\cite[equations (1.5)-(1.6), p. 414,
and Lemma 2.5, p. 426]{hildebrand1993integers}.

\subsection{Auxiliary estimates}

We establish the estimates needed to prove our main theorem
uniformly for $0<u\le U$, for every fixed $U>0$.
The proof adapts the classical induction based on the
largest-prime-factor decomposition; see
\cite[Section 1, p. 416]{hildebrand1993integers}.

\begin{lemma}[Generalised Mertens' estimate]\label{lem:mertens}
There is a constant $B_G$ such that
\[
T(x):=\sum_{\substack{p\in P\\|p|\le x}}|p|^{-\delta}
=
\log\log x+B_G
+O\!\left(\frac1{\log x}\right)
\qquad(x\ge2).
\]
\end{lemma}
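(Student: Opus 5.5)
The plan is to follow the classical route to Mertens' theorem: first prove a Chebyshev–Mertens estimate for the von Mangoldt sum, then pass to $T(x)$ by partial summation. The prime number theorem \eqref{primecounting} cannot be used directly. With only a $\sim$ relation, partial summation leaves an error $\int \varepsilon(t)\,dt/(t\log t)$, which need not be bounded. Instead I would use Axiom A \eqref{axiom} directly.

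Define $\Lambda_G(g)=\log|p|$ if $g=p^k$ with $p\in P$ and $k\ge1$, and $\Lambda_G(g)=0$ otherwise. Unique factorisation and multiplicativity of the norm give $\log|g|=\sum_{h\mid g}\Lambda_G(h)$. Summing over $|g|\le x$ and writing $g=hk$,
\[
\sum_{|g|\le x}\log|g|=\sum_{|h|\le x}\Lambda_G(h)\,N_G(x/|h|).
\]

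For the left side, Stieltjes integration against \eqref{axiom} gives $\int_1^x\log t\,dN_G(t)=Ax^\delta\log x+O(x^\delta)$, using $\eta<\delta$.

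For the right side, substitute $N_G(x/|h|)=Ax^\delta|h|^{-\delta}+O((x/|h|)^\eta)$. The error term needs the Chebyshev bound $\psi_G(t):=\sum_{|h|\le t}\Lambda_G(h)=O(t^\delta)$. This follows from \eqref{primecounting}: the primes contribute at most $\pi_G(t)\log t=O(t^\delta)$. For prime powers $p^k$ with $k\ge2$ we have $|p|\le t^{1/2}$, so they contribute $O(t^{\delta/2}\log^2 t)$, since $k\le \log t/\log|p|$ and $\log|p|$ is bounded below by $\log\min_{p}|p|>0$. Partial summation then gives
\[
x^\eta\sum_{|h|\le x}\Lambda_G(h)|h|^{-\eta}=O\big(x^\eta\cdot x^{\delta-\eta}\big)=O(x^\delta).
\]
Comparing the two sides and dividing by $Ax^\delta$ yields
\[
\sum_{|h|\le x}\Lambda_G(h)|h|^{-\delta}=\log x+O(1).
\]
This $O(1)$ error is the crux of the argument and the step I expect to need the most care. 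In particular one must check that the integration by parts really produces only $O(x^\delta)$ and no stray $\log x$ factors.

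Next remove the prime powers. The sum $\sum_{p}\sum_{k\ge2}\log|p|\,|p|^{-k\delta}$ converges, since $\pi_G(t)=O(t^\delta)$ makes $\sum_p \log|p|\,|p|^{-2\delta}$ convergent by partial summation. Hence
\[
S(x):=\sum_{|p|\le x}\log|p|\,|p|^{-\delta}=\log x+R(x),\qquad R(x)=O(1).
\]

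Finally, write $T(x)=\int_{t_0^-}^x \frac{dS(t)}{\log t}$, where $t_0=\min_{p\in P}|p|>1$; the case $P=\varnothing$ is impossible under Axiom A. Integrating by parts,
\[
T(x)=\frac{S(x)}{\log x}+\int_{t_0}^x\frac{S(t)}{t\log^2 t}\,dt
=1+O\!\left(\frac1{\log x}\right)+\log\log x-\log\log t_0+\int_{t_0}^x\frac{R(t)}{t\log^2t}\,dt .
\]
Since $R$ is bounded, the last integral converges as $x\to\infty$, and its tail beyond $x$ is $O(1/\log x)$. Collecting constants gives $B_G=1-\log\log t_0+\int_{t_0}^\infty R(t)\,dt/(t\log^2t)$ and the stated formula for large $x$.

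For $2\le x\le x_0$ both sides are bounded, so the estimate holds on all of $x\ge2$ after enlarging the implied constant. This is valid even if $t_0>2$, where $T(x)=0$ for small $x$.
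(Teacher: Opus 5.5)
Your proof is correct, but it takes a different route from the paper, which gives no argument here and simply cites Knopfmacher's book (Chapter VI, Lemma 2.5). You instead run the classical Mertens argument inside $G$ in three steps. First, summing $\log|g|=\sum_{h\mid g}\Lambda_G(h)$ over $|g|\le x$ and inserting Axiom A on both sides gives $\sum_{|h|\le x}\Lambda_G(h)|h|^{-\delta}=\log x+O(1)$. Second, you remove the prime powers with $k\ge2$ as a convergent series. Third, a partial summation against $1/\log t$ turns $S(x)=\log x+R(x)$ into $T(x)=\log\log x+B_G+O(1/\log x)$.

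The delicate points check out. The error terms $O(x^\eta\log x)$ and $x^\eta\sum_{|h|\le x}\Lambda_G(h)|h|^{-\eta}$ are $O(x^\delta)$ precisely because $\eta<\delta$ and $\psi_G(t)\ll t^\delta$, so no stray $\log x$ survives. The tail $\int_x^\infty R(t)\,dt/(t\log^2t)$ is $O(1/\log x)$, and the range $2\le x<t_0$ is absorbed into the implied constant.

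Your route is transparent where the paper's is only a citation. It shows that the single input beyond Axiom A is the Chebyshev-type bound $\psi_G(x)\ll x^\delta$, and it gives an explicit formula for $B_G$. You take that bound from \eqref{primecounting}, which is legitimate because the paper assumes \eqref{primecounting} throughout. For a proof from Axiom A alone, for instance if the abstract prime number theorem is itself derived via Mertens-type estimates, you would need an independent derivation of the Chebyshev bound. The citation buys brevity and nothing more.
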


\begin{proof}
See Knopfmacher
\cite[Chapter VI, Lemma 2.5]{knopfmacher2015abstract}.
\end{proof}

\begin{lemma}[An integral estimate]\label{lem:integral}
For fixed $\alpha>0$ and $c>1$, as $x\to\infty$,
\[
\int_c^x\frac{t^{\alpha-1}}{\log t}\,dt
=
\frac{x^\alpha}{\alpha\log x}
+
O_{\alpha,c}\!\left(
\frac{x^\alpha}{(\log x)^2}
\right).
\]
\end{lemma}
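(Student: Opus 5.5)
We prove Lemma~\ref{lem:integral} by a single integration by parts, which produces the main term, followed by a crude bound on the leftover integral. Write the integrand as $t^{\alpha-1}(\log t)^{-1}$ and integrate $t^{\alpha-1}$, using $\frac{d}{dt}(\log t)^{-1}=-t^{-1}(\log t)^{-2}$. This gives
\[
\int_c^x\frac{t^{\alpha-1}}{\log t}\,dt
=\frac{x^\alpha}{\alpha\log x}-\frac{c^\alpha}{\alpha\log c}
+\frac1\alpha\int_c^x\frac{t^{\alpha-1}}{(\log t)^2}\,dt .
\]
The boundary term at $c$ is a constant depending only on $\alpha$ and $c$. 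For $x$ large it is trivially $O_{\alpha,c}(x^\alpha/(\log x)^2)$, since $x^\alpha/(\log x)^2\to\infty$.

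It then remains to show that $J(x):=\int_c^x t^{\alpha-1}(\log t)^{-2}\,dt=O_{\alpha,c}(x^\alpha/(\log x)^2)$. We split the range at $\sqrt x$, assuming $x\ge c^2$ so that the split is legitimate.

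On $[c,\sqrt x]$ we bound $(\log t)^{-2}\le(\log c)^{-2}$. The contribution is then at most $(\log c)^{-2}x^{\alpha/2}/\alpha$. This is $O_{\alpha,c}(x^\alpha/(\log x)^2)$ because $x^{\alpha/2}(\log x)^2/x^\alpha\to0$.

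On $[\sqrt x,x]$ we have $\log t\ge\frac12\log x$, so $(\log t)^{-2}\le 4(\log x)^{-2}$. The contribution is therefore at most $4(\log x)^{-2}\cdot x^\alpha/\alpha$.

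Adding the two ranges gives the claimed bound on $J(x)$. Dividing by $\alpha$ and combining with the boundary term yields the stated estimate for all sufficiently large $x$. Since $\alpha$ and $c$ are fixed, all implied constants depend only on them.

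There is no real obstacle here. The only point needing care is the control of $J(x)$ near the lower endpoint, where $\log t$ is small and the bound $(\log t)^{-2}\lesssim(\log x)^{-2}$ fails. The split at $\sqrt x$ handles this: it gives up a power saving $x^{\alpha/2}$, which easily absorbs the logarithmic loss, so only the range $[\sqrt x,x]$ matters and there $\log t\asymp\log x$.
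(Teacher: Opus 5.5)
Your proposal is correct and follows the paper's proof exactly: one integration by parts yielding $\frac{x^\alpha}{\alpha\log x}-\frac{c^\alpha}{\alpha\log c}+\frac1\alpha\int_c^x t^{\alpha-1}(\log t)^{-2}\,dt$, followed by splitting the remaining integral at $\sqrt{x}$ (for $x\ge c^2$), with the bounds $(\log c)^{-2}$ on $[c,\sqrt{x}]$ and $4(\log x)^{-2}$ on $[\sqrt{x},x]$. Your explicit absorption of the constant boundary term at $c$ is a detail the paper leaves implicit.
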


\begin{proof}
Integration by parts gives
\[
\int_c^x\frac{t^{\alpha-1}}{\log t}\,dt
=
\frac{x^\alpha}{\alpha\log x}
-\frac{c^\alpha}{\alpha\log c}
+\frac1\alpha
\int_c^x\frac{t^{\alpha-1}}{(\log t)^2}\,dt.
\]
For $x\ge c^2$, we split the remaining integral at $\sqrt{x}$.
On $[c,\sqrt{x}]$,
\[
\int_c^{\sqrt{x}}
\frac{t^{\alpha-1}}{(\log t)^2}\,dt
\le
\frac1{(\log c)^2}
\int_c^{\sqrt{x}}t^{\alpha-1}\,dt
\ll_{\alpha,c}x^{\alpha/2}.
\]
On $[\sqrt{x},x]$,
\[
\int_{\sqrt{x}}^x
\frac{t^{\alpha-1}}{(\log t)^2}\,dt
\le
\frac4{(\log x)^2}
\int_{\sqrt{x}}^x t^{\alpha-1}\,dt
\ll_\alpha\frac{x^\alpha}{(\log x)^2}.
\]
Since
\[
x^{\alpha/2}
=o\!\left(\frac{x^\alpha}{(\log x)^2}\right),
\]
the result follows.
\end{proof}

\begin{lemma}[Weighted prime sums]\label{lem:weighted}
For the exponent $0\le\eta<\delta$ in Axiom A,
\[
\sum_{\substack{p\in P\\|p|\le x}}|p|^{-\eta}
\ll\frac{x^{\delta-\eta}}{\log x}
\qquad(x\ge2).
\]
\end{lemma}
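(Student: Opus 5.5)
We prove Lemma~\ref{lem:weighted} by partial summation against the prime counting function, using the Prime Number Theorem \eqref{primecounting} for $\pi_G$ together with the integral estimate of Lemma~\ref{lem:integral}.

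\textbf{Step 1: a Chebyshev-type bound.} By \eqref{primecounting} there is $x_0$ with $\pi_G(t)\le 2t^\delta/(\delta\log t)$ for $t\ge x_0$. For $2\le t\le x_0$ we use that $\pi_G(t)\le \pi_G(x_0)$ is finite by condition (iii) of Definition~\ref{def:arithmetic-semigroup}. Together these give
\[
\pi_G(t)\ll \frac{t^\delta}{\log t}\qquad (t\ge 2),
\]
and $\pi_G(t)$ is bounded for $1\le t\le 2$.

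\textbf{Step 2: partial summation.} Every prime satisfies $|p|>1$. Fix $c\in(1,2)$ and write the sum as a Stieltjes integral against $\pi_G$. This gives
\[
\sum_{|p|\le x}|p|^{-\eta}
=\pi_G(c)\,c^{-\eta}+\int_{c}^{x}t^{-\eta}\,d\pi_G(t)
=\pi_G(x)x^{-\eta}+\sum_{|p|\le c}|p|^{-\eta}-\pi_G(c)c^{-\eta}+\eta\int_c^x\pi_G(t)\,t^{-\eta-1}\,dt .
\]
It is cleaner to avoid $c$ altogether: since $\pi_G$ vanishes below $\min_{p}|p|>1$, we can start the integral at $1$ with no boundary term. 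That leaves
\[
\sum_{|p|\le x}|p|^{-\eta}=\pi_G(x)x^{-\eta}+\eta\int_1^x\pi_G(t)\,t^{-\eta-1}\,dt .
\]
If $\eta=0$, the statement is just Step 1.

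\textbf{Step 3: bounding the two terms.} The first term is $\ll x^{\delta-\eta}/\log x$ by Step 1. Split the integral at $2$.
\begin{itemize}
\item The part over $[1,2]$ is $O(1)$, because $\pi_G$ is bounded there.
\item The part over $[2,x]$ is, by Step 1,
\[
\ll\int_2^x\frac{t^{\delta-\eta-1}}{\log t}\,dt .
\]
Lemma~\ref{lem:integral} applies with $\alpha=\delta-\eta>0$ and $c=2$, and bounds this by $\ll x^{\delta-\eta}/\log x$ for large $x$.
\end{itemize}
For bounded $x\ge2$ the integral is trivially bounded, so the estimate holds for all $x\ge2$. Finally, the $O(1)$ term is absorbed, since $x^{\delta-\eta}/\log x$ is bounded below on $[2,\infty)$.

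\textbf{Main obstacle.} The argument is routine. The only care needed is in the regime of small $t$: Step 1 must be made uniform for all $t\ge2$, not only asymptotically, and the lower limit of the partial summation must be handled correctly. Both are dealt with by the finiteness of $\pi_G$ on bounded ranges.
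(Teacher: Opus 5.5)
Your proof is correct, but it takes a different route from the paper's.

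The paper sums by parts against the Mertens sum $T(t)=\sum_{|p|\le t}|p|^{-\delta}$ with weight $t^{\delta-\eta}$. It then substitutes the asymptotic of Lemma~\ref{lem:mertens} and integrates the main terms by parts, so that $x^{\delta-\eta}\log\log x$ and $B_Gx^{\delta-\eta}$ cancel exactly. Only then does it apply Lemma~\ref{lem:integral} to what is left.

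You instead sum by parts directly against $\pi_G$ with weight $t^{-\eta}$. Every term is then nonnegative. You therefore need only the one-sided Chebyshev-type bound $\pi_G(t)\ll t^\delta/\log t$, which you correctly make uniform on $t\ge2$ using local finiteness, and no cancellation is involved.

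This buys you two things:
\begin{itemize}
\item \emph{Robustness.} The paper's route genuinely needs the Mertens error term to be $O(1/\log t)$. An error of only $o(1)$ would leave an uncontrolled $o(x^{\delta-\eta})$ after the cancellation.
\item \emph{Sharpness.} If you feed in the full asymptotic \eqref{primecounting} instead of just the upper bound, the same computation gives $\sum_{|p|\le x}|p|^{-\eta}\sim x^{\delta-\eta}/((\delta-\eta)\log x)$. The paper's route cannot deliver this, because there the $O(1/\log t)$ Mertens error enters at the same order as the main term.
\end{itemize}

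One small slip: in the first display of Step~2 the leading term should be $\sum_{|p|\le c}|p|^{-\eta}$, not $\pi_G(c)c^{-\eta}$. This is harmless, since you discard that version and start the integral at $1$, where $\pi_G(1)=0$.
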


\begin{proof}
Recall from Lemma \ref{lem:mertens} that
\[
T(t):=\sum_{\substack{p\in P\\|p|\le t}}|p|^{-\delta}
=\log\log t+B_G+O(1/\log t).
\]
Choose $q_0>1$ below every prime norm, which is
possible by local finiteness and since every prime has norm $|p| > 1$. Then $T(q_0)=0$.
By enlarging the implied constant on a fixed initial
interval, Mertens' estimate holds for all $t\ge q_0$.

Since $|p|^{-\eta}=|p|^{-\delta}|p|^{\delta-\eta}$,
partial summation with cumulative sum $T(t)$ and
weight $t^{\delta-\eta}$ gives, for $x\ge q_0$, then
\begin{equation}\label{abel}
\sum_{\substack{p\in P\\|p|\le x}}|p|^{-\eta}
=x^{\delta-\eta}T(x)-(\delta-\eta)\int_{q_0}^x
T(t)t^{\delta-\eta-1}\,dt.
\end{equation}
The lower-endpoint term vanishes because $T(q_0)=0$.

Substituting the Mertens' estimate into \eqref{abel}
and integrating the main terms by parts, we obtain
\begin{align*}
\sum_{\substack{p\in P\\|p|\le x}}|p|^{-\eta}
&=
q_0^{\delta-\eta}(\log\log q_0+B_G)
+\int_{q_0}^x\frac{t^{\delta-\eta-1}}{\log t}\,dt\\
&\quad+
O\!\left(
\frac{x^{\delta-\eta}}{\log x}
+\int_{q_0}^x\frac{t^{\delta-\eta-1}}{\log t}\,dt
\right).
\end{align*}
Here the terms $x^{\delta-\eta}\log\log x$ and
$B_Gx^{\delta-\eta}$ have cancelled exactly.
Since $\delta-\eta>0$, Lemma \ref{lem:integral} gives
\[
\sum_{\substack{p\in P\\|p|\le x}}|p|^{-\eta}
\ll\frac{x^{\delta-\eta}}{\log x}
\]
for sufficiently large $x$.
\end{proof}
In an arithmetic semigroup, distinct primes can have the same
norm. We use the inclusion-exclusion identity below to account for
this possibility.
For each $t$ that is the norm of a prime element, let
\[
d_t=\#\{p\in P:|p|=t\}.
\]
Note that all sums indexed by $t$ below run over the possible distinct norms
of prime elements.

\begin{lemma}[Largest-prime-norm decomposition]\label{lem:ie}
For $x,y\ge1$,
\begin{equation}\label{ie}
\Psi_G(x,y)
=
1+\sum_{t\le y}\sum_{k=1}^{d_t}
(-1)^{k-1}\binom{d_t}{k}\Psi_G(x/t^k,t).
\end{equation}
\end{lemma}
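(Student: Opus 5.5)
We classify the elements of $S_G(x,y)\setminus\{1\}$ by the largest norm $t$ of a prime dividing them. Every $y$-smooth $g\neq 1$ with $|g|\le x$ has a well-defined maximal prime norm $t=\max_i|p_i|\le y$. Put $P_t=\{p\in P:|p|=t\}$, so $\#P_t=d_t$. Then $g$ factors uniquely as $g=hm$. Here $h$ is a nontrivial product of primes from $P_t$, and $m$ is built only from primes of norm $<t$, i.e.\ it is $t^-$-smooth. We therefore get
\[
\Psi_G(x,y)=1+\sum_{t\le y}\#\{g\in S_G(x,t): g \text{ divisible by some } p\in P_t\}.
\]
This reduces the lemma to showing, for each fixed $t$, that
\[
\#\{g\in S_G(x,t): \exists\, p\in P_t,\ p\mid g\}=\sum_{k=1}^{d_t}(-1)^{k-1}\binom{d_t}{k}\Psi_G(x/t^k,t).
\]

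We prove this by inclusion--exclusion over the events $E_p=\{p\mid g\}$ for $p\in P_t$. For a subset $Q\subseteq P_t$ with $\#Q=k\ge1$, let $q_Q=\prod_{p\in Q}p$. By unique factorisation, $g\in S_G(x,t)$ is divisible by every $p\in Q$ exactly when $g=q_Q g'$ for a unique $g'$. The element $g'$ is again $t$-smooth, since $g$ contains no primes of norm $>t$ and dividing out primes of norm $t$ cannot create any. Multiplicativity of the norm gives $|g'|=|g|/t^k\le x/t^k$. Conversely, any $g'\in S_G(x/t^k,t)$ yields such a $g=q_Qg'$. So the map $g'\mapsto q_Qg'$ is a bijection between $S_G(x/t^k,t)$ and $\bigcap_{p\in Q}E_p$, and the count is $\Psi_G(x/t^k,t)$, independent of which $Q$ of size $k$ we chose. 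There are $\binom{d_t}{k}$ such $Q$, so inclusion--exclusion for $\#\bigcup_{p\in P_t}E_p$ gives exactly the inner sum.

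Some routine points remain.
\begin{itemize}
\item $d_t$ is finite by local finiteness (condition (iii)), so all sums are finite.
\item Only finitely many $t\le y$ occur.
\item The convention $\Psi_G(z,t)=0$ for $z<1$ handles the terms where $t^k>x$.
\item The case $x<1$ does not arise, since $x,y\ge1$ is assumed.
\end{itemize}

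The main subtlety is in the bijection step. One must check that the count of the intersection uses $S_G(\cdot,t)$, which allows further factors of norm $t$, including higher powers of primes in $Q$ and primes in $P_t\setminus Q$. It does not use the strictly smaller $t^-$-smooth set. This is precisely why inclusion--exclusion, rather than a simple product decomposition, is needed when $d_t>1$.
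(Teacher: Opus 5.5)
Your proposal is correct and follows essentially the same argument as the paper. You partition $S_G(x,y)\setminus\{1\}$ by the largest prime norm $t$, apply inclusion--exclusion over the $d_t$ primes of norm $t$, and use the bijection $S_G(x/t^k,t)\to\bigcap_{p\in Q}\{g\in S_G(x,t): p\mid g\}$ given by $g'\mapsto g'\prod_{p\in Q}p$.
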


\begin{proof}
For each $t$ that is the norm of a prime element, define
\[
P_t=\{p\in P:|p|=t\},
\quad \text{and} \quad 
A_p(t)=\{a\in S_G(x,t):p\mid a\}
\quad \text{for}\,\, p\in P_t.
\]
These sets are finite and $\#P_t=d_t$. We have
\[
\bigcup_{p\in P_t}A_p(t)
=
\left\{
a\in G\setminus\{1\}:|a|\le x,\ 
\max_{\substack{p\in P\\p\mid a}}|p|=t
\right\}.
\]
Indeed, membership in $S_G(x,t)$ bounds every prime factor
norm by $t$, while divisibility by some $p\in P_t$ ensures
that the bound is reached.

Let $J\subseteq P_t$ be non-empty, where $\#J=k$.
Since the primes in $J$ are distinct, unique factorisation
gives
\[
\bigcap_{p\in J}A_p(t)
=
\left\{
a\in S_G(x,t):
\prod_{p\in J}p\mid a
\right\}.
\]
Consider the map
\[
S_G(x/t^k,t)\longrightarrow\bigcap_{p\in J}A_p(t),
\qquad
b\longmapsto b\prod_{p\in J}p.
\]
Since
\[
\left|\prod_{p\in J}p\right|=t^k,
\]
this map is well-defined. Conversely, every element $a$
of the intersection can be written uniquely as
\[
a=b\prod_{p\in J}p.
\]
The quotient satisfies
\[
|b|=\frac{|a|}{t^k}\le\frac{x}{t^k},
\]
and every prime divisor of $b$ has norm at most $t$.
Thus $b\in S_G(x/t^k,t)$, proving surjectivity and
uniqueness of the preimage. The map is therefore a
bijection. If $x/t^k<1$ (as, by convention, $\Psi_G(z, t) = 0$ for $z < 1$), both sets are empty.

Consequently,
\[
\#\bigcap_{p\in J}A_p(t)=\Psi_G(x/t^k,t).
\]
Finite inclusion-exclusion gives
\begin{align*}
\#\bigcup_{p\in P_t}A_p(t)
&=
\sum_{\varnothing\ne J\subseteq P_t}
(-1)^{\#J-1}
\#\bigcap_{p\in J}A_p(t)\\
&=
\sum_{k=1}^{d_t}
(-1)^{k-1}\binom{d_t}{k}\Psi_G(x/t^k,t),
\end{align*}
because $P_t$ has exactly $\binom{d_t}{k}$ subsets
of cardinality $k$.

Finally, every non-identity element has a uniquely
determined largest prime factor norm. Hence
\[
S_G(x,y)
=
\{1\}\sqcup
\bigsqcup_{t\le y}
\left(\bigcup_{p\in P_t}A_p(t)\right).
\]
By taking cardinalities and substituting the
inclusion-exclusion formula we prove \eqref{ie}.
\end{proof}

Note that when different prime elements have different norms, i.e.
$d_t=1$ for each $t$, then \eqref{ie} reduces to
\[
\Psi_G(x,y)
=
1+\sum_{\substack{p\in P\\|p|\le y}}
\Psi_G(x/|p|,|p|).
\]
For $G=\mathbb N_{\geq 1}$, this is the classical Buchstab identity
used in the classical smooth-number induction proof in \cite[Chapter 9, Theorem 9.3]{de2023analytic}. Formula \eqref{ie}
is its replacement when distinct primes can have equal norms.\\
To use \eqref{ie} in the induction, we must control
the correction terms with $k\ge2$, which arise when
distinct prime elements have the same norm.
The following estimate will bound their total
contribution over large prime norms.
\begin{lemma}[Equal-norm prime pairs]\label{lem:collision}
For $L\ge2$, we have that
\[
\sum_{t>L}d_t^2t^{-2\delta}
\ll\frac1{\log L}.\]
\end{lemma}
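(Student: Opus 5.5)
The plan is to bound one factor $d_t$ pointwise using Axiom A, and then control what remains with the generalised Mertens estimate (Lemma \ref{lem:mertens}).

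\emph{Step 1: a pointwise bound for $d_t$.} Every prime of norm $t$ is an element of norm exactly $t$. Hence, for every $0<\varepsilon<t$,
\[
d_t\le N_G(t)-N_G(t-\varepsilon)=A\bigl(t^\delta-(t-\varepsilon)^\delta\bigr)+O(t^{\eta}),
\]
where the $O$-term collects the two Axiom A error terms, each $O(t^\eta)$ because $t-\varepsilon\le t$. Letting $\varepsilon\to0^+$ gives $d_t\ll t^{\eta}$ for all large $t$. For small $t$ this also holds after enlarging the constant, since $t\ge q_0>1$ and $d_t$ is finite by local finiteness. Therefore
\[
d_t^2t^{-2\delta}\ll d_t\,t^{-\delta}\cdot t^{-(\delta-\eta)},
\]
and so
\[
\sum_{t>L}d_t^2t^{-2\delta}\ll\sum_{\substack{p\in P\\|p|>L}}|p|^{-\delta}\,|p|^{-(\delta-\eta)} .
\]
This is where $\eta<\delta$ is used: it supplies the extra decaying factor.

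\emph{Step 2: dyadic decomposition and Mertens.} Split the range $|p|>L$ into the blocks $2^jL<|p|\le2^{j+1}L$, for $j\ge0$. On the $j$-th block we have $|p|^{-(\delta-\eta)}\le(2^jL)^{-(\delta-\eta)}$. The sum of $|p|^{-\delta}$ over the block is $T(2^{j+1}L)-T(2^jL)$. By Lemma \ref{lem:mertens} this equals
\[
\log\frac{\log(2^{j+1}L)}{\log(2^jL)}+O\!\left(\frac1{\log(2^jL)}\right)\ll\frac1{\log(2^jL)}\le\frac1{\log L},
\]
since $L\ge2$. Summing over $j$ then gives
\[
\sum_{t>L}d_t^2t^{-2\delta}\ll\frac1{\log L}\sum_{j\ge0}(2^jL)^{-(\delta-\eta)}\ll\frac{L^{-(\delta-\eta)}}{\log L}\le\frac1{\log L},
\]
because $\sum_{j\ge0}2^{-j(\delta-\eta)}$ is a convergent geometric series. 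This is in fact stronger than the bound claimed.

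\emph{Main obstacle.} The only delicate point is Step 1: we need a bound on $d_t$ that is uniform in $t$. Near the jump, the main term $At^\delta$ of Axiom A is continuous, so it contributes nothing in the limit, and the jump of $N_G$ is governed by the error term $O(t^\eta)$. A pointwise bound of size $o(t^\delta)$ is what the argument needs.

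If the trivial route via $d_t\le t^\eta$ were unavailable, one could instead use $d_t\le\pi_G(t)\ll t^\delta/\log t$, but this is too weak on its own. So the argument really rests on the power saving $\eta<\delta$ in Axiom A, and I would present Step 1 carefully. The rest is routine bookkeeping with Lemma \ref{lem:mertens}, applied through the dyadic blocks rather than through partial summation, which avoids fiddly boundary terms.
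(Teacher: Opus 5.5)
Your proof is correct, but it takes a different route from the paper. The paper never bounds $d_t$ pointwise. On each dyadic block it uses the crude inequality $\sum_{T<t\le 2T}d_t^2\le\bigl(\sum_{T<t\le 2T}d_t\bigr)^2\le\pi_G(2T)^2\ll T^{2\delta}/(\log T)^2$. So each block contributes $\ll(\log T)^{-2}$, and summing $(\log L+j\log 2)^{-2}$ over $j\ge0$ gives $\ll 1/\log L$. That argument needs only a Chebyshev-type upper bound $\pi_G(x)\ll x^\delta/\log x$.

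You instead extract a point-mass bound from the Axiom A error term: $d_t\le\#\{g:|g|=t\}\ll t^\eta$. This holds because the jump of $N_G$ at $t$ is controlled by twice the error term, while the main term $At^\delta$ is continuous. That step is valid, and it yields the stronger power-saving estimate $\ll L^{-(\delta-\eta)}/\log L$. The cost is that you use the quantitative form of Axiom A directly, rather than only the prime number theorem.

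One correction to your closing remark: the bound $d_t\le\pi_G(t)\ll t^\delta/\log t$ is not too weak. It gives $d_t^2t^{-2\delta}\ll d_t t^{-\delta}/\log t$. Your own dyadic Mertens argument then shows that each block $2^jL<|p|\le 2^{j+1}L$ contributes $\ll(\log(2^jL))^{-2}$, and these sum to $\ll 1/\log L$. So the lemma as stated does not rest on the power saving $\eta<\delta$, beyond its role in proving \eqref{primecounting} and Lemma \ref{lem:mertens}. The power saving only buys the extra factor $L^{-(\delta-\eta)}$.
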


\begin{proof}
For $T\ge2$, formula \eqref{primecounting} gives
\[
\sum_{T<t\le2T}d_t
=
\pi_G(2T)-\pi_G(T) \leq \pi_G(2T) \ll \frac{{(2T)}^\delta}{\log 2T}
\ll\frac{T^\delta}{\log T}.
\]
Therefore
\begin{align*}
\sum_{T<t\le2T}d_t^2t^{-2\delta}
&\le
T^{-2\delta}\sum_{T<t\le2T}d_t^2\\
&\le
T^{-2\delta}
\left(\sum_{T<t\le2T}d_t\right)^2\\
&\ll\frac1{(\log T)^2}.
\end{align*}
Summing over the intervals $(2^jL,2^{j+1}L]$, we obtain
\[
\sum_{t>L}d_t^2t^{-2\delta}
\ll
\sum_{j=0}^{\infty}
\frac1{(\log L+j\log2)^2}.
\]
Since the summand is decreasing as a function of $j$,
the last sum is at most
\begin{align*}
\frac1{(\log L)^2}
+\int_0^\infty
\frac{ds}{(\log L+s\log2)^2}
&=
\frac1{(\log L)^2}
+\frac1{\log2\,\log L}\\
&\ll\frac1{\log L},
\end{align*}
since $\log L \geq \log 2$. This proves the result.
\end{proof}

\begin{lemma}[Stieltjes summation with the Dickman weight]
\label{lem:stieltjes}
For every fixed integer $N\ge2$, uniformly for
$N<\tilde u\le N+1$ as $x\to\infty$, we have that
\begin{align*}
&\sum_{\substack{p\in P\\
x^{1/\tilde u}<|p|\le x^{1/N}}}
|p|^{-\delta}
\rho\!\left(\frac{\log x}{\log|p|}-1\right)=
\int_N^{\tilde u}\frac{\rho(v-1)}v\,dv
+O\!\left(\frac1{\log x}\right).
\end{align*}
\end{lemma}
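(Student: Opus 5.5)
Our approach is to write the sum as a Riemann--Stieltjes integral against the generalised Mertens function $T(t)=\sum_{|p|\le t}|p|^{-\delta}$ from Lemma~\ref{lem:mertens}, and to split $dT(t)$ into its smooth main part and a remainder. Set $f(t)=\rho\!\left(\frac{\log x}{\log t}-1\right)$ for $x^{1/\tilde u}<t\le x^{1/N}$. Then the left-hand side equals $\int_{x^{1/\tilde u}}^{x^{1/N}} f(t)\,dT(t)$. Write $T(t)=\log\log t+B_G+R(t)$ with $R(t)\ll 1/\log t$. On the range of integration $\log t\ge \log x/(N+1)$, so $R(t)\ll_N 1/\log x$ uniformly there.

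\emph{Main term.} The contribution of $d(\log\log t)=\frac{dt}{t\log t}$ is $\int_{x^{1/\tilde u}}^{x^{1/N}} \rho\!\left(\frac{\log x}{\log t}-1\right)\frac{dt}{t\log t}$. Substituting $v=\log x/\log t$ gives $dv=-\frac{\log x}{(\log t)^2}\frac{dt}{t}$, hence $\frac{dt}{t\log t}=-\frac{dv}{v}$. The limits $t=x^{1/\tilde u}$ and $t=x^{1/N}$ become $v=\tilde u$ and $v=N$. This produces exactly $\int_N^{\tilde u}\frac{\rho(v-1)}{v}\,dv$.

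\emph{Error term.} We handle $\int f\,dR$ by integrating by parts:
\[
\int_{a}^{b} f\,dR = f(b)R(b)-f(a)R(a)-\int_a^b R(t)\,df(t),
\qquad a=x^{1/\tilde u},\ b=x^{1/N}.
\]
Since $0<\rho\le1$, the boundary terms are $\ll_N 1/\log x$. For the integral, $f$ is continuous and monotone in $t$: as $t$ increases, $v$ decreases, and $\rho$ is decreasing, so $f$ is increasing. Hence $\int_a^b |df|=f(b)-f(a)\le 1$. Together with $\sup_{[a,b]}|R|\ll_N 1/\log x$, this gives $\int R\,df\ll 1/\log x$. The bound is uniform in $\tilde u\in(N,N+1]$, because only the lower bound $\log t\ge\log x/(N+1)$ and $0<\rho\le 1$ were used.

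The step requiring the most care is the Stieltjes bookkeeping at the endpoints. The sum runs over $|p|>x^{1/\tilde u}$, so $T$ may jump at $a$ and the integral must be taken over $(a,b]$. We take $T$ right-continuous and treat the boundary terms with the one-sided values $T(a)$ and $T(b)$; the Mertens estimate applies to these values, so nothing changes. Monotonicity of $f$ is what keeps the total variation bounded, since no derivative bound on $\rho$ at $u=1$ (where $\rho'$ jumps) is needed. The range $x^{1/(N+1)}\ge 2$ holds for large $x$, so Lemma~\ref{lem:mertens} applies throughout.
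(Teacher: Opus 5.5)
Your proposal is correct and follows essentially the same route as the paper. Both write the sum as a Stieltjes integral of the continuous nondecreasing weight $\rho(\log x/\log t-1)$ against $dT$ over $(x^{1/\tilde u},x^{1/N}]$, control the remainder $T(t)-\log\log t-B_G$ by integration by parts using its supremum $\ll 1/\log x$ and total variation of the weight at most $1$ (this is the paper's $3\sup$ bound), and evaluate the main term by the substitution $v=\log x/\log t$.
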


\begin{proof}
By the definition of $T$, the sum equals
\[
\int_{(x^{1/\tilde u},\,x^{1/N}]}
\rho\!\left(\frac{\log x}{\log t}-1\right)\,dT(t).
\]
The half-open interval agrees with the strict lower
endpoint in the prime sum.

By Lemma \ref{lem:mertens},
\[
\sup_{x^{1/\tilde u}\le t\le x^{1/N}}
|T(t)-\log\log t-B_G|
\ll\frac{\tilde u}{\log x}
\ll\frac1{\log x}.
\]
On this interval, the function
\[
t\longmapsto
\rho\!\left(\frac{\log x}{\log t}-1\right)
\]
is continuous and nondecreasing, taking values in $[0,1]$.
Also, $T(t)-\log\log t-B_G$ has bounded variation on this
compact interval.

Stieltjes integration by parts therefore gives
\begin{align*}
&\left|
\int_{(x^{1/\tilde u},\,x^{1/N}]}
\rho\!\left(\frac{\log x}{\log t}-1\right)
\,d\bigl(T(t)-\log\log t-B_G\bigr)
\right|\\
&\qquad\le
3\sup_{x^{1/\tilde u}\le t\le x^{1/N}}
|T(t)-\log\log t-B_G|\\
&\qquad\ll\frac1{\log x}.
\end{align*}
It follows that
\begin{align*}
&\sum_{\substack{p\in P\\
x^{1/\tilde u}<|p|\le x^{1/N}}}
|p|^{-\delta}
\rho\!\left(\frac{\log x}{\log|p|}-1\right)\\
&\qquad=
\int_{x^{1/\tilde u}}^{x^{1/N}}
\rho\!\left(\frac{\log x}{\log t}-1\right)
\frac{dt}{t\log t}
+O\!\left(\frac1{\log x}\right).
\end{align*}
With
\[
v=\frac{\log x}{\log t},
\qquad
\frac{dt}{t\log t}=-\frac{dv}{v},
\]
the endpoints $x^{1/\tilde u}$ and $x^{1/N}$ correspond
to $\tilde u$ and $N$, respectively. Hence
\[
\int_{x^{1/\tilde u}}^{x^{1/N}}
\rho\!\left(\frac{\log x}{\log t}-1\right)
\frac{dt}{t\log t}
=
\int_N^{\tilde u}\frac{\rho(v-1)}v\,dv.
\]
This proves the lemma.
\end{proof}

\section{The main theorem}\label{sec:main}
We now establish the Dickman asymptotic for smooth
elements of an arithmetic semigroup satisfying Axiom A,
uniformly when $u$ lies in a fixed bounded interval.
The proof follows the classical induction on the
smoothness parameter, using the largest-prime-norm
decomposition \eqref{ie}. The additional terms arising
from distinct primes with equal norms are controlled
by Lemma \ref{lem:collision}.

\begin{theorem}\label{thm:main}
Let $G$ be an arithmetic semigroup satisfying Axiom A
as in \eqref{axiom}. For every fixed $U>0$, as $x\to\infty$,
\begin{equation}\label{main}
\Psi_G(x,y)
=
Ax^\delta\rho(u)
+O\!\left(\frac{x^\delta}{\log x}\right),
\qquad
u=\frac{\log x}{\log y},
\end{equation}
uniformly for $y\ge2$ and $0<u\le U$.
\end{theorem}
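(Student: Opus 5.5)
We prove \eqref{main} by induction on $N=\lceil u\rceil$, establishing for each fixed integer $N\ge1$ that \eqref{main} holds uniformly for $N-1<u\le N$ (with $y\ge2$); covering $(0,U]$ then needs only finitely many steps. \textbf{Base case} $0<u\le1$: here $y\ge x$, so every element of norm at most $x$ is $y$-smooth and $\Psi_G(x,y)=N_G(x)=Ax^\delta+O(x^\eta)$ by \eqref{axiom}. Since $\rho(u)=1$ and $x^\eta\ll x^\delta/\log x$, this is \eqref{main}. It is convenient to also record the crude bound $\Psi_G(z,t)\le N_G(z)\ll z^\delta$ for all $z\ge1$.

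\textbf{Inductive step.} Assume \eqref{main} for $0<u\le N$ and take $N<u\le N+1$, so $y=x^{1/u}<x^{1/N}$. Apply Lemma~\ref{lem:ie} twice, to $(x,x^{1/N})$ and to $(x,y)$, and subtract:
\[
\Psi_G(x,y)=\Psi_G(x,x^{1/N})-\sum_{y<t\le x^{1/N}}\sum_{k=1}^{d_t}(-1)^{k-1}\binom{d_t}{k}\Psi_G(x/t^k,t).
\]
The first term is $Ax^\delta\rho(N)+O(x^\delta/\log x)$ by hypothesis (at $u=N$).

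For the $k=1$ terms, $\sum_{|p|\in(y,x^{1/N}]}\Psi_G(x/|p|,|p|)$, the parameter is $u_p=\log(x/|p|)/\log|p|=\log x/\log|p|-1\in[N-1,u-1]\subseteq[N-1,N]$. The induction hypothesis gives
\[
\Psi_G(x/|p|,|p|)=A x^\delta|p|^{-\delta}\rho(u_p)+O\bigl(x^\delta|p|^{-\delta}/\log(x/|p|)\bigr).
\]
Since $|p|\le x^{1/N}$ with $N\ge1$, we have $\log(x/|p|)\gg\log x$ (for $N=1$ one uses $|p|\le y$ and $u>1$; write $\log(x/|p|)\ge(1-1/u)\log x$ and handle $u$ near $1$ by the crude bound, or note $x/|p|\ge1$ and argue directly). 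The error therefore sums to $\frac{x^\delta}{\log x}\sum_{y<|p|\le x^{1/N}}|p|^{-\delta}$. By Lemma~\ref{lem:mertens} this sum is $\log\log x^{1/N}-\log\log y+O(1/\log y)=\log(u/N)+O(1)=O(1)$. The main term equals $Ax^\delta$ times the sum in Lemma~\ref{lem:stieltjes} with $\tilde u=u$, giving
\[
Ax^\delta\int_N^u\frac{\rho(v-1)}{v}\,dv+O\!\left(\frac{x^\delta}{\log x}\right).
\]
Combining this with \eqref{rho-recursion} yields $Ax^\delta\rho(u)$. For $N=1$, Lemma~\ref{lem:stieltjes} is stated only for $N\ge2$, but the same Stieltjes argument works verbatim with $N=1$.

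\textbf{The $k\ge2$ terms are the main obstacle.} Bound them crudely by $\Psi_G(x/t^k,t)\ll x^\delta t^{-k\delta}$, giving
\[
\ll x^\delta\sum_{t>y}\sum_{k\ge2}\binom{d_t}{k}t^{-k\delta}.
\]
For $k=2$ this is $\ll x^\delta\sum_{t>y}d_t^2t^{-2\delta}\ll x^\delta/\log y\ll_U x^\delta/\log x$ by Lemma~\ref{lem:collision}, using $\log y\ge\log x/U$. For $k\ge3$ I would use $\binom{d_t}{k}t^{-k\delta}\le (d_tt^{-\delta})^k/k!$ together with $d_tt^{-\delta}\le\pi_G(t)t^{-\delta}\ll1/\log t$, which tends to $0$. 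Then the $k\ge3$ tail is dominated by $d_t^2t^{-2\delta}$ times a bounded factor, and Lemma~\ref{lem:collision} again applies. This closes the induction, with implied constants depending on $N\le\lceil U\rceil$ only, hence uniform.
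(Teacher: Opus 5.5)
\textbf{Verdict: genuine gap at $N=1$.} For $N\ge2$ your inductive step matches the paper's argument: subtract \eqref{ie} at $x^{1/N}$ and $y$, apply the hypothesis to the $k=1$ terms, use Lemma \ref{lem:stieltjes}, and bound the $k\ge2$ terms via $d_tt^{-\delta}\ll1/\log t$ and Lemma \ref{lem:collision}. The problem is the step $N=1$, i.e.\ the range $1<u\le2$, which you try to obtain from the same induction.

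\textbf{Why the $N=1$ step fails.} There the prime sum runs over $y<|p|\le x$, so $z=x/|p|$ takes every size in $[1,x/y)$, including bounded values.
\begin{itemize}
\item Your inequality $\log(x/|p|)\ge(1-1/u)\log x$ is reversed. It is equivalent to $|p|\le y$, whereas every prime in the sum has $|p|>y$. So $\log(x/|p|)$ is not $\gg\log x$ on this range.
\item The base-case estimate is an asymptotic as the norm bound tends to infinity, so it cannot be invoked for bounded $z$.
\item Even where it applies, its error $O(z^\delta/\log z)$ is too weak. Take $u=2$ and group the primes by $x/|p|\in(2^j,2^{j+1}]$. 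Each group contains $\asymp(x/2^j)^\delta/\log x$ primes and contributes $\asymp x^\delta/(j\log x)$. Summing over $j\ll\log x$ gives an error of order $x^\delta\log\log x/\log x$, not $x^\delta/\log x$.
\item The crude bound $\Psi_G(z,t)\ll z^\delta$ does not rescue this. Using it for the primes with $x/|p|\le K$ costs about $x^\delta\log K/\log x$, and the remaining groups still cost about $x^\delta(\log\log x-\log\log K)/\log x$. No choice of cut-off $K$ removes the extra $\log\log x$.
\end{itemize}

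\textbf{The missing idea.} In this range $|p|>y\ge\sqrt{x}$, so $|p|\ge x/|p|$ and hence $\Psi_G(x/|p|,|p|)=N_G(x/|p|)$ exactly. The $k\ge2$ terms vanish, since $t^2>x$. Axiom A then gives the error $O\bigl((x/|p|)^\eta\bigr)$, valid for all $x/|p|\ge1$, instead of $O\bigl((x/|p|)^\delta/\log(x/|p|)\bigr)$. These errors sum to $x^\eta\sum_{|p|\le x}|p|^{-\eta}\ll x^\delta/\log x$ by Lemma \ref{lem:weighted}, a lemma your proposal never uses. Mertens' estimate gives the main term $Ax^\delta(1-\log u)$.

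This is exactly the paper's separate treatment of $1<u\le2$. After it, the induction starts at $N=2$, where $x/|p|\ge x^{1-1/N}\ge\sqrt{x}$ guarantees $\log(x/|p|)\gg\log x$, and your argument goes through unchanged.
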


\begin{proof}
We first prove \eqref{main} uniformly for $0<u\le2$.
We then proceed by induction, extending the uniform
estimate from $0<u\le N$ to $0<u\le N+1$.

\smallskip
\noindent\textbf{The range $0<u\le1$.}
Here $y=x^{1/u}\ge x$. Since every prime divisor of an
element of norm at most $x$ has norm at most $x$,
\[
\Psi_G(x,x^{1/u})=N_G(x)=Ax^\delta+O(x^\eta).
\]
Moreover,
\[
\frac{x^\eta}{x^\delta/\log x}
=
\frac{\log x}{x^{\delta-\eta}}
\longrightarrow0,
\]
because $\delta-\eta>0$. Since $\rho(u)=1$ for
$0<u\le1$, this proves \eqref{main} uniformly in this range.

\smallskip
\noindent\textbf{The range $1<u\le2$.}
Put $y=x^{1/u}$, so that $y\ge\sqrt{x}$.
An element of norm at most $x$ cannot be divisible by two prime
factors whose norms exceed $y$, as
their product would have norm greater than $y^2\ge x$.

Every element counted by $N_G(x)-\Psi_G(x,y)$ therefore
has a unique prime factor $p$ with $|p|>y$.
For a fixed such prime, division by $p$ provides a
bijection with the set of elements of norm at most $x/|p|$. Thus,
\[
\Psi_G(x,y)
=
N_G(x)
-\sum_{\substack{p\in P\\y<|p|\le x}}N_G(x/|p|).
\]

Using Axiom A for $x/|p|\ge1$ one obtains
\begin{align*}
\Psi_G(x,y)
&=
Ax^\delta
-Ax^\delta
\sum_{\substack{p\in P\\y<|p|\le x}}|p|^{-\delta}+
O\!\left(
x^\eta+
x^\eta
\sum_{\substack{p\in P\\y<|p|\le x}}|p|^{-\eta}
\right).
\end{align*}
By Lemma \ref{lem:weighted},
\[
x^\eta
\sum_{\substack{p\in P\\y<|p|\le x}}|p|^{-\eta}
\ll\frac{x^\delta}{\log x}.
\]
Note that the term $x^\eta$ is absorbed into the same bound.

Furthermore, Lemma \ref{lem:mertens} gives
\begin{align*}
\sum_{\substack{p\in P\\y<|p|\le x}}|p|^{-\delta}
&=
\log\log x-\log\log y
+O\!\left(\frac1{\log x}+\frac1{\log y}\right)\\
&=
\log u+O(1/\log x),
\end{align*}
uniformly for $1<u\le2$, since $\log y=(\log x)/u$.
Consequently,
\[
\Psi_G(x,x^{1/u})
=
Ax^\delta(1-\log u)
+O(x^\delta/\log x).
\]
As $\rho(u)=1-\log u$ on $[1,2]$, this establishes
\eqref{main} uniformly for $0<u\le2$.

\smallskip
\noindent\textbf{The induction step.}
Let $N\ge2$ be an integer. Suppose that
\begin{equation}\label{ih}
\Psi_G(x,x^{1/u})
=
Ax^\delta\rho(u)
+O\!\left(\frac{x^\delta}{\log x}\right)
\end{equation}
holds uniformly for $0<u\le N$, whenever
$x^{1/u}\ge2$. We prove the same estimate uniformly for
$N<\tilde u\le N+1$.

Subtracting \eqref{ie} at the thresholds $x^{1/N}$
and $x^{1/\tilde u}$ gives
\begin{equation}\label{subtract}
\Psi_G(x,x^{1/\tilde u})
=
\Psi_G(x,x^{1/N})-S_1-\sum_{i\ge2}S_i,
\end{equation}
where
\[
S_i
=
\sum_{x^{1/\tilde u}<t\le x^{1/N}}
(-1)^{i-1}\binom{d_t}{i}\Psi_G(x/t^i,t),
\]
and $\binom{d_t}{i}=0$ when $i>d_t$.
These sums are finite.

\smallskip
\noindent\emph{The terms with $i\ge2$.}
Axiom A implies $N_G(t)\ll t^\delta$ for $t\ge1$.
Together with our convention for arguments below $1$,
this yields
\[
\Psi_G(x/t^i,t)\ll x^\delta t^{-i\delta}.
\]
Hence,
\[\sum_{i\ge2}|S_i|
\ll
x^\delta
\sum_{x^{1/\tilde u}<t\le x^{1/N}}
\sum_{i=2}^{d_t}\binom{d_t}{i}t^{-i\delta}.
\]
By \eqref{primecounting},
\[
d_tt^{-\delta}
\le\pi_G(t)t^{-\delta}
\ll\frac1{\log t}.
\]
Since
\[
t>x^{1/\tilde u}\ge x^{1/(N+1)},
\]
we have that $d_tt^{-\delta}\le1$ uniformly over the
summation range for sufficiently large $x$.

Using the fact that $\binom{d_t}{i}\le d_t^i/i!$, we obtain
\begin{align*}
\sum_{i=2}^{d_t}\binom{d_t}{i}t^{-i\delta}
&\le
\sum_{i=2}^\infty\frac{(d_tt^{-\delta})^i}{i!}\\
&\le
d_t^2t^{-2\delta}\sum_{i=2}^\infty\frac1{i!}\\
&\ll d_t^2t^{-2\delta}.
\end{align*}
Applying Lemma \ref{lem:collision} with
$L=x^{1/\tilde u}$ therefore gives
\begin{equation}\label{error}
\sum_{i\ge2}|S_i|
\ll
\frac{x^\delta}{\log(x^{1/\tilde u})}
=
\frac{\tilde u x^\delta}{\log x}
\ll\frac{x^\delta}{\log x}.
\end{equation}

\smallskip
\noindent\emph{The principal sum $S_1$.}
Regrouping the terms according to norms of prime elements gives
\begin{equation}\label{regroup}
\begin{split}
S_1
&=
\sum_{x^{1/\tilde u}<t\le x^{1/N}}
d_t\Psi_G(x/t,t)\\
&=
\sum_{\substack{p\in P\\
x^{1/\tilde u}<|p|\le x^{1/N}}}
\Psi_G(x/|p|,|p|).
\end{split}
\end{equation}
For every prime in this range, we have that
\[
N-1
\le
\frac{\log(x/|p|)}{\log|p|}
=
\frac{\log x}{\log|p|}-1
<
\tilde u-1
\le N.
\]
Moreover,
\[
x/|p|\ge x^{1-1/N}\longrightarrow\infty,
\qquad
\log(x/|p|)\ge(1-1/N)\log x,
\]
uniformly over the range. Also,
$|p|>x^{1/(N+1)}\ge2$ for sufficiently large $x$.
Thus every summand lies in the range of the uniform
induction hypothesis.

Applying \eqref{ih} with norm bound $x/|p|$ and
smoothness threshold $|p|$, we obtain
\begin{align*}
\Psi_G(x/|p|,|p|)
&=
Ax^\delta|p|^{-\delta}
\rho\!\left(\frac{\log x}{\log|p|}-1\right)\\
&\quad+
O\!\left(
\frac{x^\delta|p|^{-\delta}}{\log x}
\right).
\end{align*}
By Lemma \ref{lem:mertens},
\begin{align*}
\sum_{\substack{p\in P\\
x^{1/\tilde u}<|p|\le x^{1/N}}}|p|^{-\delta}
&=
\log(\tilde u/N)+O(1/\log x)\\
&=O(1).
\end{align*}
The sum of the errors is therefore
$O(x^\delta/\log x)$, and
\begin{equation}\label{sone}
\begin{split}
S_1
&=
Ax^\delta
\sum_{\substack{p\in P\\
x^{1/\tilde u}<|p|\le x^{1/N}}}
|p|^{-\delta}
\rho\!\left(\frac{\log x}{\log|p|}-1\right)\\
&\quad+O(x^\delta/\log x).
\end{split}
\end{equation}

\smallskip
\noindent\emph{Completion of the induction.}
Lemma \ref{lem:stieltjes} applied to \eqref{sone} gives
\[
S_1
=
Ax^\delta
\int_N^{\tilde u}\frac{\rho(v-1)}v\,dv
+O(x^\delta/\log x).
\]
Combining this with \eqref{subtract}, \eqref{error},
and the induction hypothesis at $u=N$, we obtain
\begin{align*}
\Psi_G(x,x^{1/\tilde u})
&=
Ax^\delta
\left(
\rho(N)-\int_N^{\tilde u}\frac{\rho(v-1)}v\,dv
\right)\\
&\quad+O(x^\delta/\log x).
\end{align*}
By \eqref{rho-recursion}, the expression in parentheses
is $\rho(\tilde u)$. Hence
\[
\Psi_G(x,x^{1/\tilde u})
=
Ax^\delta\rho(\tilde u)
+O(x^\delta/\log x),
\]
uniformly for $N<\tilde u\le N+1$.
This completes the induction.

For any fixed $U>0$, finitely many induction steps cover the range
$0<u\le U$. Their constants can be absorbed into a single
constant depending on $G$ and $U$, proving \eqref{main}.
\end{proof}

An immediate corollary is the following result, describing the proportion of $y$-smooth elements among all elements of norm at most $x$.
\begin{corollary}\label{cor:smooth-proportion}
Under Theorem \ref{thm:main}, for every fixed $U>0$,
$$
\frac{\Psi_{G}(x,y)}{N_G(x)}
=
\rho(u)+O\!\left(\frac{1}{\log x}\right),
\qquad
u=\frac{\log x}{\log y},
$$
uniformly for $y\ge2$ and $0<u\le U$, as $x\to\infty$.
\end{corollary}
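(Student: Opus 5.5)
The plan is to divide the asymptotic of Theorem~\ref{thm:main} by the Axiom A asymptotic for $N_G(x)$. Write
\[
\Psi_G(x,y)=Ax^\delta\rho(u)+E_1,\qquad N_G(x)=Ax^\delta+E_2,
\]
where $E_1\ll x^\delta/\log x$ uniformly for $y\ge2$ and $0<u\le U$ by Theorem~\ref{thm:main}. By \eqref{axiom}, $E_2\ll x^\eta$, and since $\eta<\delta$ we have $x^\eta=o(x^\delta/\log x)$, as already observed in the range $0<u\le1$ of the proof. Hence $E_2\ll x^\delta/\log x$ as well.

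Next we use the identity
\[
\frac{\Psi_G(x,y)}{N_G(x)}-\rho(u)=\frac{E_1-\rho(u)E_2}{N_G(x)}.
\]
For $x$ sufficiently large, \eqref{axiom} gives $N_G(x)\ge \tfrac12 Ax^\delta>0$; the threshold depends only on $G$. We also have $0<\rho(u)\le1$ for $0<u\le U$, so the numerator is $O(x^\delta/\log x)$. Dividing, the quotient is $O(1/\log x)$, with an implied constant depending only on $G$ and $U$.

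The only point needing care is uniformity. Both error bounds are uniform in $y\ge2$ and $0<u\le U$. The bound on $E_2$ and the lower bound on $N_G(x)$ do not depend on $y$ at all. The factor $\rho(u)$ is bounded by $1$ independently of $u$. So uniformity transfers directly. There is no real obstacle beyond noting that $N_G(x)$ is bounded away from zero relative to $x^\delta$ once $x$ is large, which is immediate from $A>0$.
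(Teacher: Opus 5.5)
Your proposal is correct and follows the paper's proof, which simply combines Theorem~\ref{thm:main} with Axiom A. You have written out that division explicitly, using the identity $\Psi_G(x,y)-\rho(u)N_G(x)=E_1-\rho(u)E_2$, the lower bound $N_G(x)\ge\tfrac12Ax^\delta$, and $0<\rho(u)\le1$; these make the uniformity in $y\ge2$, $0<u\le U$ transparent.
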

\begin{proof}
    It follows from direct application of Theorem \ref{thm:main} and Axiom A.
\end{proof}

A natural direction for further work is to extend the range of uniformity to allow $u$ to grow with $x$. This requires explicit control of the dependence of the error estimates on $u$, together with an error sufficiently small relative to $\rho(u)$, which tends to zero as $u\to\infty$.
Another suggestion would be investigating secondary terms in the smooth-element asymptotic. Retaining more information in the prime-sum estimates and the largest-prime-norm decomposition may permit a more precise expansion, possibly under stronger hypotheses on the arithmetic semigroups. Such an expansion would clarify whether the error term $O(x^\delta/\log x)$ can be reduced after the appropriate secondary term has been included.

\section{Applications}\label{sec:app}
 Besides the usual arithmetic semigroups satisfying Axiom A, i.e.\ positive integers under multiplication and non-zero integral ideals in the ring of integers of a number field, there are many other interesting examples to consider. 
 
 Finite abelian groups and finite semisimple rings provide two such examples, and Theorem \ref{thm:main} can be applied to give quantitative asymptotics for the number of isomorphism classes of bounded cardinality satisfying the corresponding smoothness restrictions. We explore these now.\\

\textbf{Example 1: Finite abelian groups}\\
Let $G_{ab}$ be the set of isomorphism classes of finite abelian
  groups with direct sum as operation and norm mapping $|F| = \# F$. Since finite abelian groups have unique decompositions into direct sums of prime power cyclic groups (up to ordering), we can take our set of primes to be $P_{ab} = \{\mathbb{Z}/p^r\mathbb{Z} \,|\, p \text{ prime}, r\ge 1\}$. Note that the associated arithmetic semigroup is not additively graded as $|G\oplus H| = |G|\cdot |H| \neq |G| + |H|$ in general (so that the results that follow are not implied by those in \cite{warlimont1991arithmetical} and \cite{manstavivcius1992remarks}).\\
  
  Let $\zeta_{G_{ab}}(s)=\sum_{n\geq1}a(n)n^{-s}$ where $a(n)$ defines the number of isomorphism classes of abelian groups of order $n$. Following Example 2.5 in \cite{knopfmacher2015abstract}, we have the pseudo-convergent double Euler product 
  $$\zeta_{G_{ab}}(s) = \prod_{r\geq1}\prod_p (1-p^{-rs})^{-1}
  = \prod_{r\geq1}\zeta(rs).$$
  Then, the residue of $\zeta_{G_{ab}}$ at $s=1$ is therefore the value of the remaining product at $s=1$ given by Corollary 1.3 in \cite{knopfmacher1970arithmetical}, 
  $$ A_{G_{ab}} = \prod_{r=2}^{\infty}\zeta(r) = 2.29485\dots  $$
    This is Knopfmacher's Corollary 1.2 \cite{knopfmacher2015abstract}, attributed there to Erd\H{o}s-Szekeres \cite[\S1,pp. 97--101]{erdos1934abelian}:
    $$
  N_{G_{ab}}(x) = \sum_{n\leq x} a(n) = \Big(\prod_{r=2}^{\infty}\zeta(r)\Big)x + O(x^{1/2}).$$
  Therefore, with $u = \log{x}/\log{y}$ and with $\delta_{G_{ab}}=1$, $A_{G_{ab}}=\prod_{r\ge2}\zeta(r)$, $\eta_{G_{ab}}=1/2$, our Theorem \ref{thm:main} leads us to
  $$
   \Psi_{G_{ab}}(x,y) = (1+o(1))\Big(\prod_{r=2}^{\infty}\zeta(r)\Big)\,x\,\rho(u).
   $$
Here $\delta=1$ follows from the linear growth of
$N_{G_{ab}}(x)$ established by Erd\H{o}s and Szekeres
\cite[\S1, pp. 97--101]{erdos1934abelian}.
Analytically, this is reflected in the simple pole
of $\zeta_{G_{ab}}(s)$ at $s=1$, arising from the
factor $\zeta(s)$, since the remaining product
$\prod_{r\ge2}\zeta(rs)$ is analytic and nonzero there.\\

\textbf{Example 2: Semisimple
finite rings}

Let $G_{sar}$ be the set of isomorphism classes of finite semisimple associative rings with direct sum as operation and norm function $|R| = \# R$. The Wedderburn-Artin Theorem gives that each ring in $G_{sar}$ decomposes as a direct sum of simple rings, i.e.\ those of the form $M_n(\mathbb{F}_q)$. Hence, we can take our primes to be
$$
  P_{sar} = \{M_n(\mathbb{F}_q)\,|\, n\geq 1, q = p^r, p \text{ prime and } r \geq 1\}, \qquad |M_n(\mathbb{F}_q)| = p^{rn^2}.
$$ 

Once again, $G_{sar}$ is not additively graded, so that the results that follow is not implies by results of \cite{warlimont1991arithmetical} and \cite{manstavivcius1992remarks}.\\

Here, write $\zeta_{G_{sar}}(s)=\sum_{n\geq1}S(n)n^{-s}$, where $S(n)$ is the number of isomorphism classes of semisimple rings of size $n$. From Theorem $3$ in \cite{knopfmacher1970arithmetical}, we have that 
$$N_{G_{sar}}(x) = \sum_{n\leq x}S(n) = \alpha_1 x + \alpha_2 x^{1/2} + O(x^{1/3}\log^2 x)$$

where $$\alpha_1 := \prod_{\substack{r,m\geq 1\\ rm^2>1}}\zeta(rm^2) = 2.498\dots \quad \text{and} \quad \alpha_2 = \zeta(\tfrac12)\prod_{\substack{r,m\geq 1\\ rm^2>2}}\zeta(\tfrac12 rm^2).$$

Taking $\delta =1, A = \alpha_1$ and $\eta = \frac{1}{2}$, then for $u = \log{x}/\log{y}$ our Theorem \ref{thm:main} gives
$$\Psi_{G_{sar}}(x,y) = (1+o(1))\,\alpha_1\,x\,\rho(u).$$
 We are not aware of any prior treatment of $\Psi_{G_{sar}}(x,y)$ in the literature and believe that this asymptotic is new.\\

In conclusion, Theorem \ref{thm:main} leads us to an explicit bound on the
number of $y$-smooth objects of bounded size in certain interesting arithmetic semigroups. For $G_{ab}$ this recovers classical results, while
for $G_{sar}$ the resulting
asymptotic for $\Psi_{G_{sar}}(x,y)$ seems to be new. It would be interesting to explore further applications of Theorem \ref{thm:main}, for example semisimple finite-dimensional algebras over $\mathbb{F}_q$, finite modules over the ring of integers of a number field, divisors of smooth algebraic varieties over number fields, nilpotent finite rings and algebras. It would also be interesting to see whether generalisations of \ref{thm:main} exist without the assumption that Axiom A holds.

%%%%%%%%%%%%%%%%%%%%%%%%%
%%%%%%%%%%%%%%%%%%%%%%%%%
%%%%%%%%%%%%%%%%%%%%%%%%%

%\bibliographystyle{alpha}
%\bibliography{references.bib}

\end{document}